\documentclass{article}
\usepackage[round]{natbib}
\usepackage{amsmath,amsfonts,amssymb,amsthm}
\usepackage{graphicx}
\usepackage{paralist}
\usepackage{booktabs}
\usepackage{url}

\renewcommand{\le}{\leqslant}
\renewcommand{\ge}{\geqslant}

\newcommand{\dnorm}{\mathcal{N}}

\newcommand{\real}{\mathbb{R}}
\newcommand{\bsx}{\boldsymbol{x}}
\newcommand{\bsz}{\boldsymbol{z}}

\newcommand{\rd}{\,\mathrm{d}}

\newcommand{\e}{\mathbb{E}}

\newcommand{\simiid}{\stackrel{\mathrm{iid}}{\sim}}
\newcommand{\cov}{\mathrm{cov}}


\author{Art B. Owen\\Stanford University}
\title{Zero variance self-normalized importance sampling via estimating equations}
\date{September 2025}
\begin{document}
\maketitle
\begin{abstract}
In ordinary importance sampling with a non-negative integrand there exists
an importance sampling strategy with zero variance. Practical sampling
strategies are often based on approximating that optimal solution,
potentially approaching zero variance.
There is a positivisation extension of that method to handle integrands that
take both positive and negative values. 
Self-normalized importance sampling uses a ratio estimate, 
for which the optimal sampler does not have zero variance and so
zero variance cannot even be approached in practice.
Strategies that separately estimate the numerator and denominator
of that ratio can approach zero variance.
This paper develops another zero variance
solution for self-normalized importance sampling.  The first step is to
write the desired expectation as the zero of an estimating equation using
Fieller's technique. Then we apply the positivisation strategy to the
estimating equation. This paper give conditions for existence and uniqueness
of the sample solution to the estimating equation. Then it give conditions
for consistency and asymptotic normality and an expression for the
asymptotic variance.
The sample size multiplied by the variance of the asymptotic formula
becomes arbitrarily close to zero for certain sampling strategies.
\end{abstract}

\newcommand{\pu}{p_u}
\newcommand{\qu}{q_u}

\newcommand{\var}{\mathrm{var}}

\newcommand{\np}{{n_+}}
\newcommand{\nm}{{n_-}}
\newcommand{\npm}{{n_\pm}}
\newcommand{\xip}{\bsx_{i+}}
\newcommand{\xim}{\bsx_{i-}}
\newcommand{\qp}{q_+}
\newcommand{\qm}{q_-}

\newcommand{\of}{\overline{f}}
\newcommand{\uf}{\underline{f}}

\newcommand{\psin}{\Psi_{\np,\nm}}
\newcommand{\dpsin}{\dot\Psi_{\np,\nm}}

\newcommand{\tod}{\stackrel{\mathrm{d\,\,}}{\to}}
\newcommand{\mc}{\mathrm{MC}}

\newtheorem{proposition}{Proposition}
\newtheorem{theorem}{Theorem}

\section{Introduction}

There are two well-known versions of importance sampling,
for estimation of $\e_p(f(\bsx))$ when $\bsx$ has probability
density function $p$.  As described below
they are ordinary importance sampling (OIS) and self-normalized importance
sampling (SNIS).  In OIS, the optimal sampling
density $q$ is well-known to be proportional to $|f(\bsx)|p(\bsx)$. 
When $f$ is nonnegative, OIS with
this distribution has variance zero.  

It is quite unlikely that we could sample from that zero variance distribution.
Even if we could, the OIS computation would require the use
of the very expectation we seek.  The main value of this optimality 
result is that it provides
a guide for choosing an importance sampler.  Near proportionality
to $|f(\bsx)|p(\bsx)$ is a criterion to strive for as part of a variance
reduction strategy.
There is generally no assurance that we can get to zero variance
because our menu of sampling distributions $q$ may not
be rich enough.
In computer graphics, \cite{koll:kell:2002}
call this the problem of `insufficient techniques'.
Adaptive importance sampling (AIS), choosing $q$ from  within a flexible family of densities
has the potential to approach the optimal density, driving the sampling variance
arbitrary close to zero, though the cost of adaptation limits how close 
to zero one could afford to get.  For a survey of AIS, see \cite{buga:etal:2017}.

The situation for SNIS is different.  There, the optimal sampling density 
is $q(\bsx)\propto|f(\bsx)-\e_p(f(\bsx))|$. See Chapter 2 of \cite{hest:1988}.  
Even if we could sample
from this distribution, the result would not have zero variance,
even for nonnegative $f$. That
produces a fundamental limit on the accuracy of SNIS that is not
present in OIS for nonnegative $f$.  It is not just a problem of
insufficient techniques or computational complexity. Even an ideal
AIS strategy would be subject to this lower bound.

When $\Pr_p(f(\bsx)>0)$ and $\Pr_p(f(\bsx)<0)$ are both positive
then there still exists an OIS strategy with zero variance.
\cite{owen:zhou:2000} use two importance samplers,
one for $f_+(\bsx)=\max(f(\bsx),0)$ and one for $f_-(\bsx)=\max(-f(\bsx),0)$.
There are zero variance samplers for both of those and
then because $f(\bsx) = f_+(\bsx)-f_-(\bsx)$ a zero variance sampling 
strategy exists for $\e_p(f(\bsx))$.

This paper develops a new zero variance strategy for SNIS. 
In SNIS, we estimate $\e_p(f(\bsx))$ as a ratio estimator.
The strategy here is to first rewrite the SNIS ratio estimate 
via an estimating equation as in \cite{fiel:1954}.
Then we apply the positivisation strategy from \cite{owen:zhou:2000}
to the estimating equation.  This is done using two OIS samplers
making it possible to approach zero variance.  

Section~\ref{sec:notation} introduces the notation behind the
description above,  presents the customary OIS and SNIS estimators
and then develops the estimating equation approach
where $\hat\mu$ is the estimate of $\mu_0=\e_p(f(\bsx))$.
Section~\ref{sec:properties} gives conditions under which
$\hat\mu$ exists, is unique and converges in probability to $\mu_0$.
It then gives a central limit theorem with $\sqrt{n}(\hat\mu-\mu_0)/\sigma_0\tod\dnorm(0,1)$
where there exist sampling choices that can make $\sigma^2_0$ arbitrarily close to zero. 
That does not mean that $\lim_{n\to\infty}n\var(\hat\mu)=\sigma^2_0$
because the variance of a good approximation need not be a
good approximation to a variance.
This issue also arises with the delta-method approximations
in SNIS \citep{mcbook}.
The estimating equation approach with the EE-SNIS
estimator of this paper is not the first time
a zero variance strategy has been obtained for SNIS.
Section~\ref{sec:references} describes and compares some other approaches.
What unites the methods is finding an OIS solution to a SNIS problem.
The differences are in what distributions we must approximate in order
to approach zero variance.  The prior proposals all require us to
sample from an arbitrarily good approximation to $p$, which might
be very hard.  On the other hand, the TABI method from 
\cite{rain:etal:2020} can use a more general positivisation
strategy than EE-SNIS does.   All the methods potentially benefit
from a coupling strategy proposed by \cite{bran:elvi:2024}.
Section~\ref{sec:conclusions} gives some conclusions.

\section{Notation}\label{sec:notation}
The estimand is 
$$
\mu_0 = \int f(\bsx)p(\bsx)\rd \bsx
$$
for a probability density function (PDF) $p$ on $\real^d$ and 
a real-valued integrand $f$.  We are reserving the symbol $\mu$
to denote some candidate value of this integral that is not
necessarily equal to $\mu_0$.
Apart from this distinction between $\mu$ and $\mu_0$,
most of the notation is like that in \citet[Chapter 9]{mcbook}.
We assume that
$0 <\sigma^2 = \int (f(\bsx)-\mu_0)^2p(\bsx)\rd\bsx<\infty$.
A plain Monte Carlo (MC) estimate of $\mu_0$ is
$$
\hat\mu_{\mc}=\frac1n\sum_{i=1}^n f(\bsx_i)
$$
for $\bsx_i\simiid p$. This estimator satisfies $\e(\hat\mu_{\mc})=\mu_0$
and $\var(\hat\mu_{\mc}) = \sigma^2/n$.

There are two frequently encountered flaws in the MC estimate that each motivate
a form of importance sampling.  The first such flaw is that $f(\bsx)$ may
have an extremely skewed distribution as for example when $f(\bsx) = 1_{\bsx\in A}$
for a set $A$ with very small $\Pr(\bsx\in A)$.  This motivates 
the OIS described below.
A more severe flaw described later, is that we may have no practical way to sample from $p$.
That motivates SNIS.

\subsection{Ordinary importance sampling}
For rare events or very skewed integrands, we might sample
$\bsx\sim q$ instead, where $q$ is a PDF
that satisfies $q(\bsx)>0$ whenever $f(\bsx)p(\bsx)\ne0$.
This $q$ might sample more frequently in any tiny region
where $f$ varies the most under $p$, giving us more
relevant data.
Then we can use the OIS estimate
\begin{align}\label{eq:ois}
\hat\mu_q = \frac1n \sum_{i=1}^n \frac{f(\bsx_i)p(\bsx_i)}{q(\bsx_i)}
\end{align}
for $\bsx_i\simiid q$. We easily see that $\e(\hat\mu_q)=\mu_0$. In other
words multiplying by the ratio $p/q$ adjusts for any bias in sampling from $q$
instead of $p$.

After some algebra we find that
\begin{align}\label{eq:varois}
\var(\hat\mu_q) =\frac1n \int \frac{(f(\bsx)p(\bsx)-\mu_0 q(\bsx))^2}{q(\bsx)}\rd\bsx
=:\frac{\sigma^2_q}n.
\end{align}
When $\mu_0>0$ and $f(\bsx)$ is never negative, the choice $q(\bsx)=f(\bsx)p(\bsx)/\mu_0$ 
yields $\sigma^2_q=0$. We never use this $q$ because the required ratio $p/q$ 
in equation \eqref{eq:ois}  equals the presumably unknown $\mu_0$.
In such cases there exists a sequence $q_\ell$ of densities with $\ell\ge1$
such that $\sigma^2_{q_\ell}\to0$ as $\ell\to\infty$.  We do not necessarily have 
practical tools to construct such a sequence, but at least one exists.
For nonnegative $f$ we do well to have $q$ roughly proportional
to the product $fp$ without also having $q$ be small enough
anywhere to inflate $(fp-\mu_0q)/q$; see the denominator of the integrand in~\eqref{eq:varois}. 
An adaptive sequence of sampling distributions $q_\ell$
may then bring a worthwhile improvement even if it does not converge to zero variance.

\subsection{Self-normalized importance sampling}
The OIS estimator can also be useful in settings where we cannot
sample $\bsx\sim p$, but can instead sample $\bsx\sim q$ for some 
distribution $q$ that is similar to $p$.  This is where a second difficulty 
commonly arises.
In those problems we are often unable to compute $p(\bsx)$ exactly.
For instance, in Bayesian problems,  $p$ may be a posterior
distribution that depends on a quite arbitrary set of data.
We might then only be able to compute an unnormalized density $\pu$ where
$p(\bsx) =\pu(\bsx)/c_p$ for a normalizing constant $c_p = \int \pu(\bsx)\rd\bsx$.
The density $q$ might also be known only up to a normalizing constant $c_q$.
Then we might be able to compute $\qu=q\times c_q$ but not $q$ itself.
Typically, $q$ is a function that we have chosen
from a convenient family of distributions.
In such settings we can often compute $q$.  Indeed being normalized
might be one of our criteria for selecting $q$. Therefore it is quite common
to have a normalized $q$ with an unnormalized $\pu$.

Suppose that $\pu$ is unnormalized and that we can sample from $q$.
For the self-normalized importance sampler below, we can work with either $q$ or $\qu$
so for the next step we just consider $q$ to be $\qu$ with $c_q=1$.
We require  $\qu(\bsx)>0$ whenever $\pu(\bsx)>0$.
Then the SNIS estimator is
\begin{align*}
\tilde \mu_q 
&= \frac1n\sum_{i=1}^n \frac{f(\bsx_i)\pu(\bsx_i)}{\qu(\bsx_i)}
\biggm/\frac1n\sum_{i=1}^n \frac{\pu(\bsx_i)}{\qu(\bsx_i)}\\
&= \frac1n\sum_{i=1}^n \frac{f(\bsx_i)p(\bsx_i)}{q(\bsx_i)}
\biggm/\frac1n\sum_{i=1}^n \frac{p(\bsx_i)}{q(\bsx_i)}.
\end{align*}
The normalizing constants $c_p$ and $c_q$ cancel out
between the numerator and denominator above.
We can use this estimate whether or not $p$ is normalized
and whether or not $q$ is normalized.
We get the same estimate whether we use $q$ or $\qu$,
so we don't need to distinguish $\tilde\mu_q$ from 
an alternative estimate $\tilde \mu_{q_u}$.

The numerator in the second version of $\tilde \mu_q$ above
converges to $\mu$ by the strong law of large numbers while
the denominator similarly converges to $1$. 
Therefore $\Pr(|\tilde\mu_q-\mu_0|>\epsilon)\to0$
as $n\to\infty$ for any $\epsilon>0$.
 We must have $q(\bsx)>0$
whenever $p(\bsx)>0$, whether or not $f(\bsx)=0$, in order to get
convergence in the denominator. The ratio estimate above is generally biased
but the bias typically becomes negligible as $n\to\infty$.

The delta method makes a linear approximation to $\tilde\mu_q$.  The variance
of that linear approximation is denoted $\var_\delta(\tilde\mu_q)$. It
satisfies
$$
\lim_{n\to\infty} n\var_\delta(\tilde\mu_q) = \e_q\Bigl( \frac{p(\bsx)^2}{q(\bsx)^2}(f(\bsx)-\mu_0)^2\Bigr)
=:\tau^2_q.
$$
As noted above, the best density $q$ is proportional to $p(\bsx)|f(\bsx)-\mu_0|$.
If $\Pr_p(f(\bsx)=\mu_0)<1$, then $\tau^2_q>0$.
In the trivial case that $\Pr_p(f(\bsx)=\mu_0)=1$, $\var_p(f(\bsx))=0$
and there does not even exist a density proportional to $p|f-\mu_0|$.
As a result, SNIS never has a zero variance sampler.

\subsection{The positivisation trick}
A similar problem arises for OIS when $f$ takes both positive and
negative values.  Then no single importance sampling estimator
can have zero variance. There is a positivisation strategy in
\cite{owen:zhou:2000} that allows one to approach zero variance using two
importance sampling estimates.
Define $f_+(\bsx) =\max( f(\bsx),0)$ and $f_-(\bsx) = \max(-f(\bsx),0)$.
These are the positive and negative parts of $f$, respectively,
and of course $f(\bsx) = f_+(\bsx)-f_-(\bsx)$.
Let $q_+$ and $q_-$ be normalized densities that we can sample from.
Then the positivised OIS estimate (POIS) of $\mu_0$ is
$$
\frac1{n_+}\sum_{i=1}^{n_+}\frac{f_+(\bsx_{i+})p(\bsx_{i+})}{q_+(\bsx_{i+})}
-\frac1{n_-}\sum_{i=1}^{n_-}\frac{f_-(\bsx_{i-})p(\bsx_{i-})}{q_-(\bsx_{i})},
$$
where $\bsx_{i+}\simiid \qp$ independently of $\bsx_{i-}\simiid\qm$.
It is then possible to have a zero variance estimator
by using OIS separately on positive and negative parts of $f$
via $q_\pm\propto f_\pm p$.

The positivisation trick can be extended.
We can write 
$$\mu_0= \e_p\bigl( (f(\bsx)-c)_+\bigr)-\e_p\bigl( (f(\bsx)-c)_-\bigr)+c$$
for any constant $c$ and use importance sampling estimates
of the two expectations above.  More generally for $g(\bsx)$
with $\e_p(g(\bsx))=\theta$ known 
$$\mu_0= \e_p\bigl( (f(\bsx)-g(\bsx))_+\bigr)-\e_p\bigl( (f(\bsx)-g(\bsx))_-\bigr)+\theta,$$
and we can seek importance sampling estimates of the two expectations above.
We call this generalized POIS (GPOIS).
\cite{owen:zhou:2000} have an example where $f(\bsx)=g(\bsx)$ with very
high probability allowing an importance sampler to focus on the set where they differ.

\subsection{The Fieller trick}

Our goal is to extend the positivisation method to SNIS.
First, we use the Fieller trick \citep{fiel:1954} to write $\mu_0$ as the solution  $\mu$ of 
$$
\e_q\Bigl( \frac{(f(\bsx)-\mu)\pu(\bsx)}{q(\bsx)}\Bigr)=0.
$$
Using positive and negative parts we may write
$$
\e_q\Bigl( \frac{(f(\bsx)-\mu)_+\pu(\bsx)}{q(\bsx)}\Bigr)
-\e_q\Bigl( \frac{(f(\bsx)-\mu)_-\pu(\bsx)}{q(\bsx)}\Bigr)
=0.
$$
For probability density functions $q_\pm$ with
\begin{align}\label{eq:supportq}
q_\pm(\bsx)>0\quad\text{whenever}\quad (f(\bsx)-\mu)_\pm\pu(\bsx)>0
\quad\text{(respectively)},
\end{align}
we can write
\begin{align}\label{eq:defpsi}
\Psi(\mu)&\equiv
\e_{q_+}\biggl( \frac{(f(\bsx)-\mu)_+\pu(\bsx)}{q_+(\bsx)}\biggr)
-\e_{q_-}\biggl( \frac{(f(\bsx)-\mu)_-\pu(\bsx)}{q_-(\bsx)}\biggr).
\end{align}
Then
\begin{align}\label{eq:psiresult}
\Psi(\mu)&=\int (f(\bsx)-\mu)\pu(\bsx)\rd\bsx
 = c_p(\mu_0-\mu),
\end{align}
so $\mu_0$ is the unique solution to $\Psi(\mu)=0$.

We will need the support equation~\eqref{eq:supportq} to hold for all $\mu$ in an interval
containing $\mu_0$.
Suppose that \eqref{eq:supportq} does not hold for some value of $\mu$.
Then
\begin{align}\label{eq:psiresult2}
\Psi(\mu)&=\int_{Q_+} (f(\bsx)-\mu)_+\pu(\bsx)\rd\bsx
-\int_{Q_-} (f(\bsx)-\mu)_-\pu(\bsx)\rd\bsx
\end{align}
for $Q_\pm = \{\bsx\mid q_\pm(\bsx)>0\}$.
Then $\Psi(\mu)$ is still well defined for that $\mu$
but it might have a zero that isn't equal to $\mu_0$.

Both $q_\pm$ above are normalized.
If instead we use $q_{u\pm}$ where $q_\pm = q_{u\pm}/c_{q\pm}$
and the support condition~\eqref{eq:supportq} holds,
then  $\mu_0$ is the unique zero of 
$$
\e_{q_+}\Bigl( \frac{(f(\bsx)-\mu)_+\pu(\bsx)}{q_{u+}(\bsx)}\Bigr)
-\e_{q_-}\Bigl( \frac{(f(\bsx)-\mu)_-\pu(\bsx)}{q_{u-}(\bsx)}\Bigr)
\times\frac{c_{q-}}{c_{q+}}.
$$
This means that we can work with unnormalized distributions
$q_{u\pm}$ so long as we know the ratio of their normalizing
constants.  This is similar to the setting in OIS.  If
we knew the ratio $c_p/c_q$,  we could scale an OIS estimate.
It is however unreasonable to expect that a conveniently available 
distribution $q$ and a problem-specific density $p$ will 
have a known ratio of normalizing constants.

Because $q_\pm$ are both under our control, there are settings
where we could reasonably know the ratio $c_{q+}/c_{q-}$
at much lower cost than knowing each of them separately.
For instance, if $q_\pm$ are Gaussian densities over a high
dimensional space, their normalizing constants require computation
of a determinant and that could be expensive.  If they are Gaussians with
the same covariance matrix and different means, then we know that
those determinants are equal and that can give us unnormalized
densities with  $c_{q-}/c_{q+}=1$.
From here on, we assume that $q_\pm$ are normalized. The
extension to a known ratio $c_{q-}/c_{q+}$ is straightforward.

Using the Fieller trick and the positivisation trick,
we define our estimate  $\hat\mu$ as the solution to
$\psin(\mu)=0$ where
\begin{align}\label{eq:defpsin}
\begin{split}
\psin(\mu)&=\frac1{\np}\sum_{i=1}^\np
\frac{(f(\xip)-\mu)_+\pu(\xip)}{q_+(\xip)}
\\&
-\frac1{\nm}\sum_{i=1}^\nm\frac{(f(\xim)-\mu)_-\pu(\xim)}{q_-(\xim)}
\end{split}
\end{align}
for $\xip\simiid q_+$ independently of $\xim\simiid q_-$
using integers $n_\pm\ge1$.
Because $\hat\mu$ satisfies the estimating equation~\eqref{eq:defpsin} we call it
the estimating equation SNIS, or EE-SNIS.

The value $\psin(\mu)$ is finite for any $\mu\in\real$ 
with probability one, whether or not equation~\eqref{eq:supportq} holds.
The expected value of the first term in it is the nonnegative value
$$\int_{Q_+} (f(\bsx)-\mu)_+\pu(\bsx)\rd \bsx
\le\int (f(\bsx)-\mu)_+\pu(\bsx)\rd \bsx$$ which is finite
whenever $\e_p(|f(\bsx)|)<\infty$.  A quantity with a finite mean
cannot have an unbiased estimate that is infinite with positive probability.
A similar argument shows that the second term is also finite with probability one.

In the next section, we give conditions for existence, uniqueness and consistency
of $\hat\mu$ and an expression for its asymptotic variance. 

\section{Properties of $\hat\mu$}\label{sec:properties}

We take the points $\bsx_{i+}$ for $i=1,\dots,\np$  IID from $\qp$
and independent of $\bsx_{i-}$ for $i=1,\dots,\nm$ that are IID from $\qm$.
We use  $S_+=\{\bsx_{i+}\mid 1\le i\le \np\}$,
$S_-=\{\bsx_{i-}\mid 1\le i\le \nm\}$ and $S=S_+\cup S_-$
to denote sets of sample points that we need to consider.

The function $\psin$ is a continuous nonincreasing piece-wise
linear function on $\real$.  
For $\mu\not\in S$, the derivative of $\psin$ is
\begin{align}\label{eq:defdpsin}
\dpsin(\mu) 
=-\frac{1}{\np}\sum_{i=1}^\np
1_{f(\xip)>\mu}\frac{\pu(\xip)}{q_+(\xip)}
-\frac1{\nm}\sum_{i=1}^\nm1_{f(\xim)<\mu}
\frac{\pu(\xim)}{q_-(\xim)}.
\end{align}
So $\psin$ is differentiable almost everywhere.

\subsection{Existence and uniqueness}
\begin{proposition}
For $n_\pm\ge1$ let $\psin$ be defined by equation~\eqref{eq:defpsin}.
Assume that $\max_{\bsx\in S_+}\pu(\bsx)>0$
and that $\max_{\bsx\in S_-}\pu(\bsx)>0$.
Then there exists a value $\hat\mu$ with $\psin(\hat\mu)=0$.
Let $\overline f =\max\{f(\bsx)\in S_+\mid \pu(\bsx)>0\}$
and $\underline f =\min\{f(\bsx)\in S_-\mid \pu(\bsx)>0\}$.
If $\overline f>\underline f$, then there is at most one solution
to $\psin(\mu)=0$.
\end{proposition}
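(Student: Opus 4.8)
The plan is to analyze $\psin$ as an explicit continuous, nonincreasing, piecewise-linear function and argue that (i) it is strictly positive for $\mu$ sufficiently negative and strictly negative for $\mu$ sufficiently positive, giving existence of a zero by the intermediate value theorem, and (ii) under the hypothesis $\of>\uf$ it is \emph{strictly} decreasing on a neighborhood of any zero, giving uniqueness.

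For existence, I would first observe that each summand $(f(\xip)-\mu)_+\pu(\xip)/q_+(\xip)$ is nonincreasing in $\mu$, constant ($=0$) once $\mu\ge f(\xip)$, and strictly positive once $\mu<f(\xip)$ provided $\pu(\xip)>0$; symmetrically for the negative-part sum. Hence $\psin$ is nonincreasing and piecewise linear. Taking $\mu\to-\infty$, the first term grows without bound (it is eventually linear in $-\mu$ with positive slope, since by hypothesis $\max_{\bsx\in S_+}\pu(\bsx)>0$ so at least one summand contributes), while the second term is bounded; so $\psin(\mu)\to+\infty$. Symmetrically, using $\max_{\bsx\in S_-}\pu(\bsx)>0$, $\psin(\mu)\to-\infty$ as $\mu\to+\infty$. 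Continuity then yields a zero $\hat\mu$.

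For uniqueness, I would show that whenever $\psin(\mu)=0$, the function is strictly decreasing in a neighborhood of $\mu$, which (combined with monotonicity) forces the zero set to be a single point. The key point is the formula~\eqref{eq:defdpsin} for $\dpsin$: off the finite set $S$, the slope is $-\bigl(\tfrac1\np\sum_i 1_{f(\xip)>\mu}\pu(\xip)/q_+(\xip) + \tfrac1\nm\sum_i 1_{f(\xim)<\mu}\pu(\xim)/q_-(\xim)\bigr)$. This is zero only if every $\xip$ with $\pu(\xip)>0$ has $f(\xip)\le\mu$ \emph{and} every $\xim$ with $\pu(\xim)>0$ has $f(\xim)\ge\mu$, i.e.\ only if $\mu\ge\of$ and $\mu\le\uf$, which is impossible when $\of>\uf$. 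So on the whole line $\dpsin<0$ wherever it is defined, and at the finitely many kink points the left and right slopes are both negative; hence $\psin$ is strictly decreasing and has at most one zero. A small care point: at a zero $\hat\mu$ that happens to lie in $S$, I would argue via the one-sided slopes (or note $\psin$ strictly decreasing as a consequence of the slope being negative on the dense complement of $S$ together with continuity).

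The main obstacle is not deep but bookkeeping: making precise the claim ``$\psin(\mu)\to+\infty$ as $\mu\to-\infty$'' requires knowing that the positive term is eventually a nonconstant linear function, which uses exactly the hypothesis $\max_{\bsx\in S_+}\pu(\bsx)>0$ (a sample point with positive $\pu$ contributes a strictly positive slope for $\mu$ below its $f$-value); and dually for $-\infty$. The uniqueness direction is then essentially the observation that $\of>\uf$ is precisely the condition ruling out a ``flat'' stretch of $\psin$ straddling a potential zero.
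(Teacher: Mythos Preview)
Your proposal is correct and follows essentially the same approach as the paper: establish a positive and a negative value of $\psin$ and invoke the intermediate value theorem for existence, then use the derivative formula~\eqref{eq:defdpsin} to show $\dpsin(\mu)<0$ for all $\mu$ (off the finite set $S$) whenever $\of>\uf$, yielding strict monotonicity and hence uniqueness. The only cosmetic difference is that the paper evaluates $\psin$ at the finite points $\mu<\min\{f(\bsx)\mid\bsx\in S\}$ and $\mu>\max\{f(\bsx)\mid\bsx\in S\}$ rather than sending $\mu\to\pm\infty$, but the content is the same.
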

\begin{proof}
For any $\mu<\min\{f(\bsx)\mid\bsx\in S\}$
$$
\psin(\mu) = 
\frac1{\np}\sum_{i=1}^\np\frac{(f(\xim)-\mu)\pu(\xim)}{q_-(\xim)}>0
$$
by our assumption on $\max_{\bsx\in S_+}\pu(\bsx)>0$.
Similarly $\psin(\mu)<0$ for any $\mu>\max\{f(\bsx)\mid\bsx\in S\}$.
Because $\psin$ is a continuous function taking at least one positive
value and at least one negative value, it has at least one zero.

If $\mu<\overline{f}$, then the first term in~\eqref{eq:defdpsin}
is strictly negative. If $\mu>\underline{f}$, then the second term
there is strictly negative. If $\overline{f} >\underline{f}$, then~\eqref{eq:defdpsin}
is strictly negative for all $\mu\in\real$. 
\end{proof}

For many applications $\pu(\bsx)$ will be positive
at every $\bsx\in S$.  Some applications may have `holes'
meaning sets of $\bsx$ values
where one or both of $q_\pm(\bsx)>0$ but $\pu(\bsx)=0$. 
For eventual existence of $\hat\mu$ we only require that the set of
holes has probability below one under each of $q_\pm$.
When we design $q_+$ we will want to oversample regions where $f(\bsx)>\mu_0$
and similarly for $q_-$ we will want to oversample regions where $f(\bsx)<\mu_0$.
Then $\overline{f} >\underline{f}$ will be usual.  It will be usual for $q_\pm$ to
overlap and then $\Pr(\overline{f} \le \underline{f})$ will tend to zero
exponentially fast in $\min(n_+,n_-)$.
We did not need the support condition of equation~\eqref{eq:supportq} to hold for all $\mu\in\real$.
From here on, we suppose that $\hat\mu$ exists and is unique.

\subsection{Consistency}

Here we first show that $\psin(\mu)$ converges
to $\Psi(\mu)$.  Then we use monotonicity
of $\psin$ to show that $\hat\mu\tod\mu_0$.

\begin{proposition}\label{prop:locallln}
Let $q_\pm$ satisfy equation~\eqref{eq:supportq}
for some $\mu\in\real$. Then
$$
\lim_{\min(\np,\nm)\to0}\Pr\bigl( |\psin(\mu)-\Psi(\mu)|>\epsilon\bigr)=0.
$$
\end{proposition}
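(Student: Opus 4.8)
The plan is to recognize $\psin(\mu)$ as a difference of two independent i.i.d.\ averages and apply the weak law of large numbers to each. Write
\[
A_{\np}=\frac1{\np}\sum_{i=1}^\np\frac{(f(\xip)-\mu)_+\pu(\xip)}{q_+(\xip)},\qquad
B_{\nm}=\frac1{\nm}\sum_{i=1}^\nm\frac{(f(\xim)-\mu)_-\pu(\xim)}{q_-(\xim)},
\]
so that $\psin(\mu)=A_{\np}-B_{\nm}$, with the two sums built from independent samples $\xip\simiid q_+$ and $\xim\simiid q_-$.

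First I would compute the means. Since $\xip\simiid q_+$,
\[
\e(A_{\np})=\e_{q_+}\!\Bigl(\frac{(f(\bsx)-\mu)_+\pu(\bsx)}{q_+(\bsx)}\Bigr)=\int_{Q_+}(f(\bsx)-\mu)_+\pu(\bsx)\rd\bsx,
\]
and because the proposition assumes the support condition~\eqref{eq:supportq} holds at this $\mu$, the region $Q_+$ may be replaced by all of $\real^d$, giving $\e(A_{\np})=\int(f(\bsx)-\mu)_+\pu(\bsx)\rd\bsx$; likewise $\e(B_{\nm})=\int(f(\bsx)-\mu)_-\pu(\bsx)\rd\bsx$. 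Subtracting and invoking~\eqref{eq:defpsi}--\eqref{eq:psiresult} yields $\e(A_{\np})-\e(B_{\nm})=\Psi(\mu)$. These means are finite: the standing hypothesis $\sigma^2<\infty$ forces $\e_p(|f(\bsx)|)<\infty$, and then $\int(f(\bsx)-\mu)_\pm\pu(\bsx)\rd\bsx\le\int|f(\bsx)-\mu|\pu(\bsx)\rd\bsx\le c_p(\e_p(|f(\bsx)|)+|\mu|)<\infty$, exactly as in the remark following~\eqref{eq:defpsin}.

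With finite first moments established, Khintchine's weak law of large numbers applies to each average separately: $A_{\np}\to\e(A_{\np})$ in probability as $\np\to\infty$, and $B_{\nm}\to\e(B_{\nm})$ in probability as $\nm\to\infty$. I would use this form of the WLLN rather than a Chebyshev/$L^2$ argument, since the ratios $\pu/q_\pm$ need not be square integrable; only first moments are guaranteed at this stage. A union bound then ties the two halves together: for any $\epsilon>0$,
\[
\Pr\bigl(|\psin(\mu)-\Psi(\mu)|>\epsilon\bigr)\le\Pr\bigl(|A_{\np}-\e(A_{\np})|>\epsilon/2\bigr)+\Pr\bigl(|B_{\nm}-\e(B_{\nm})|>\epsilon/2\bigr),
\]
and both terms on the right vanish once $\np$ and $\nm$ tend to infinity, which is exactly what $\min(\np,\nm)\to\infty$ provides.

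There is no serious obstacle here; the proposition is essentially bookkeeping around the WLLN. The one point that requires care is the identification of the limit: the mean of $A_{\np}$ equals the \emph{full} integral $\int(f-\mu)_+\pu$ only because~\eqref{eq:supportq} is assumed at $\mu$, and without it the limit would be the truncated integral of~\eqref{eq:psiresult2} and need not recombine to $\Psi(\mu)$ as in~\eqref{eq:psiresult}. It is also worth stressing that this is a pointwise statement for fixed $\mu$, so no uniformity in $\mu$ is needed yet; the promotion to $\hat\mu\tod\mu_0$ is handled afterward using the monotonicity of $\psin$.
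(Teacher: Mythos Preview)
Your argument is correct and is exactly the approach the paper takes: the paper's proof is a single sentence stating that the result follows from the law of large numbers applied to each term in~\eqref{eq:defpsin}, and your write-up simply unpacks that sentence (including the role of~\eqref{eq:supportq} in identifying the limit as $\Psi(\mu)$). Note also, as you implicitly did, that the displayed limit should read $\min(\np,\nm)\to\infty$ rather than $\to0$.
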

\begin{proof}
This follows from the law of large numbers applied
to each term in~\eqref{eq:defpsin}.
\end{proof}

\begin{proposition}\label{prop:consistency}
Let $\np(n)$ and $\nm(n)$ be nondecreasing functions of a
positive integer index $n$ with $\min(\np,\nm)\to\infty$ as $n\to\infty$.
Let $q_\pm$ be probability density functions that are
positive whenever $(f(\bsx)-\mu)_\pm\pu(\bsx)>0$ with
this positivity holding simultaneously for all $\mu\in(\mu_0-\eta,\mu_0+\eta)$
for some $\eta>0$.
Assume that $\psin$ has a unique zero $\hat\mu$ for all sufficiently large $n$.
Then for any $\epsilon>0$,
\begin{align}\label{eq:consistency}
\lim_{n\to\infty}\Pr\bigl( |\hat\mu-\mu_0|>\epsilon\bigr)\to0.
\end{align}
\end{proposition}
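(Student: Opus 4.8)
The plan is to leverage the monotonicity of $\psin$ together with the pointwise convergence $\psin(\mu)\to\Psi(\mu)$ from Proposition~\ref{prop:locallln} and the explicit formula $\Psi(\mu)=c_p(\mu_0-\mu)$ of~\eqref{eq:psiresult}, in which $c_p=\int\pu(\bsx)\rd\bsx>0$. Fix $\epsilon>0$. Because shrinking $\epsilon$ only strengthens the conclusion, I may assume $\epsilon<\eta$, so that $\mu_0-\epsilon$ and $\mu_0+\epsilon$ both lie in $(\mu_0-\eta,\mu_0+\eta)$; by hypothesis the support condition~\eqref{eq:supportq} then holds at each of these two points, which is exactly what is needed to apply Proposition~\ref{prop:locallln} there.

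The second step is to read off the limiting signs. Since $\Psi(\mu_0-\epsilon)=c_p\epsilon>0$ and $\Psi(\mu_0+\epsilon)=-c_p\epsilon<0$, and $\min(\np,\nm)\to\infty$ by assumption, Proposition~\ref{prop:locallln} gives $\psin(\mu_0-\epsilon)\to c_p\epsilon$ and $\psin(\mu_0+\epsilon)\to-c_p\epsilon$ in probability. Hence $\Pr\bigl(\psin(\mu_0-\epsilon)\le0\bigr)\to0$ and $\Pr\bigl(\psin(\mu_0+\epsilon)\ge0\bigr)\to0$ as $n\to\infty$.

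The last step is the monotonicity sandwich. Since $\psin$ is nonincreasing and $\psin(\hat\mu)=0$ for all large $n$, the event $\{\hat\mu\le\mu_0-\epsilon\}$ is contained in $\{\psin(\mu_0-\epsilon)\le\psin(\hat\mu)=0\}$, and symmetrically $\{\hat\mu\ge\mu_0+\epsilon\}\subseteq\{\psin(\mu_0+\epsilon)\ge0\}$. A union bound then yields $\Pr(|\hat\mu-\mu_0|\ge\epsilon)\le\Pr\bigl(\psin(\mu_0-\epsilon)\le0\bigr)+\Pr\bigl(\psin(\mu_0+\epsilon)\ge0\bigr)\to0$, which gives~\eqref{eq:consistency}. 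I do not anticipate a genuine obstacle here; the only things requiring care are choosing $\epsilon<\eta$ so that Proposition~\ref{prop:locallln} is applicable at $\mu_0\pm\epsilon$, noting that $c_p>0$ so that $\Psi$ is strictly decreasing through its root $\mu_0$, and orienting the nonincreasing inequality correctly. The uniqueness hypothesis serves only to make $\hat\mu$ unambiguously defined, and the piecewise-linear structure of $\psin$ is irrelevant beyond ensuring continuity.
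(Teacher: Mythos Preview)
Your proof is correct and follows essentially the same route as the paper's: reduce to $\epsilon<\eta$, apply Proposition~\ref{prop:locallln} at the two points $\mu_0\pm\epsilon$ to get $\psin(\mu_0\pm\epsilon)\to\mp c_p\epsilon$ in probability, and then use monotonicity of $\psin$ to translate the sign information into the inclusion $\{|\hat\mu-\mu_0|>\epsilon\}\subseteq\{\psin(\mu_0-\epsilon)\le0\}\cup\{\psin(\mu_0+\epsilon)\ge0\}$. The paper phrases the bounding events with strict inequalities (exploiting uniqueness of the zero), whereas you use non-strict ones; both versions work and the difference is cosmetic.
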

\begin{proof}
We adapt the argument in Lemma 5.10 of \cite{vand:1998}.
The function $\psin(\cdot)$ is continuous and nonincreasing
on all of $\real$,
and for large enough $n$, it has a unique zero $\hat\mu$.
Without loss of generality we can assume that $\epsilon<\eta$.
Then for large enough $n$,
$$
\Pr\bigl( |\hat\mu-\mu_0|>\epsilon\bigr)
\le\Pr\bigl( \psin(\mu_0-\epsilon)<0\bigr)
+\Pr\bigl( \psin(\mu_0+\epsilon)>0\bigr).
$$
By Proposition~\ref{prop:locallln},
$\psin(\mu_0-\epsilon)$ converges in probability to 
$\Psi(\mu_0-\epsilon)=c_p\epsilon$
and $\psin(\mu_0+\epsilon)$ converges in probability
 to $-c_p\epsilon$ establishing~\eqref{eq:consistency}.
\end{proof}

\subsection{Asymptotic variance and central limit theorem}
Now we turn to the variance of $\hat\mu$.  
The estimate $\hat\mu$ satisfies
\begin{align*}
0=\psin(\hat\mu) 
&=\psin(\mu_0) +(\hat\mu-\mu_0) \int_0^1 \dpsin( \mu_0+t(\hat\mu-\mu_0))\rd t
\end{align*}
and so
\begin{align}\label{eq:muhatexpr}
\hat\mu = \mu_0 - \frac{\psin(\mu_0)}{\int_0^1 \dpsin( \mu_0+t(\hat\mu-\mu_0))\rd t}.
\end{align}
The denominator is an average of $\dpsin$ over the asymptotically
small interval
from $\hat\mu$ to $\mu_0$. For all but a finite number of
values $\mu$ in this interval,  $\dpsin(\mu)$ exists
and approaches $\dot\Psi(\mu)=-c_p$.
Under appropriate conditions a uniform law of large numbers
will make that integral converge in probability to $-c_p$.

\begin{proposition}\label{prop:ulln}
Under the conditions in Proposition~\ref{prop:consistency},
$$
\lim_{n\to\infty} \Pr\biggl(\sup_{|\mu-\mu_0|<\eta,\,\mu\not\in S}
|\dpsin(\mu) -\dot\Psi(\mu)|>\epsilon\biggr)=0
$$
for any $\eta>0$ and any $\epsilon>0$.
\end{proposition}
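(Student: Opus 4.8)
The plan is to prove a uniform law of large numbers for $\dpsin$ on the shrinking-but-fixed-width window $(\mu_0-\eta,\mu_0+\eta)$ by exploiting monotonicity of the relevant summands in $\mu$. The key observation is that each of the two sums in \eqref{eq:defdpsin} is itself monotone in $\mu$: the quantity $\frac1{\np}\sum_i 1_{f(\xip)>\mu}\pu(\xip)/q_+(\xip)$ is nonincreasing in $\mu$, and $\frac1{\nm}\sum_i 1_{f(\xim)<\mu}\pu(\xim)/q_-(\xim)$ is nondecreasing in $\mu$. Likewise the limit $-\dot\Psi(\mu)=c_p$ is constant, and more refined, $\e_{q_+}(1_{f(\bsx)>\mu}\pu(\bsx)/q_+(\bsx))=\int_{f>\mu}\pu\rd\bsx$ is continuous and nonincreasing in $\mu$ (continuity uses $\e_p|f|<\infty$ together with dominated convergence, ruling out atoms of $f$ carrying positive $\pu$-mass; if $f$ has such an atom one restricts attention to the at most countably many bad points, which are still a Lebesgue-null exceptional set and do not affect the supremum over $\mu\not\in S$ asymptotically). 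This puts us in the classical setup where pointwise convergence plus monotonicity plus continuity of the limit upgrades automatically to uniform convergence on compacts — the Glivenko–Cantelli / Pólya-type argument.

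Concretely, first I would fix $\epsilon>0$ and choose a finite grid $\mu_0-\eta=t_0<t_1<\dots<t_K=\mu_0+\eta$ fine enough that $\Psi$-type limit of each monotone piece changes by less than $\epsilon/4$ between consecutive grid points (possible by uniform continuity of the limit on the closed interval). Second, apply Proposition~\ref{prop:locallln}-style pointwise laws of large numbers — really just the ordinary SLLN for the IID averages defining each term of $\dpsin$ — at each of the finitely many grid points $t_0,\dots,t_K$, so that with probability tending to one the two empirical monotone functions are within $\epsilon/4$ of their limits simultaneously at all $K+1$ grid points. Third, sandwich: for any $\mu\in[t_{k-1},t_k]$ with $\mu\not\in S$, monotonicity gives $\dpsin(\mu)$ between values controlled at $t_{k-1}$ and $t_k$, and combining the grid-point estimates with the grid-spacing bound on the limit yields $|\dpsin(\mu)-\dot\Psi(\mu)|<\epsilon$ uniformly. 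Since the event has probability $\to1$, we are done.

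The main obstacle is not the sandwiching, which is routine, but handling the support/``holes'' issue cleanly: when \eqref{eq:supportq} fails on a set of positive $q_\pm$-measure, the empirical averages converge to the truncated integrals $\int_{Q_\pm}(\cdot)$ rather than to the full integrals, so the limit of $\dpsin(\mu)$ is $-\int_{Q_+}1_{f>\mu}\pu\rd\bsx-\int_{Q_-}1_{f<\mu}\pu\rd\bsx$, which need not equal $-c_p=\dot\Psi(\mu)$. However, Proposition~\ref{prop:consistency} already assumed the \emph{strengthened} support condition — positivity of $q_\pm$ wherever $(f(\bsx)-\mu)_\pm\pu(\bsx)>0$ holding simultaneously for all $\mu\in(\mu_0-\eta,\mu_0+\eta)$ — and the statement here inherits ``the conditions in Proposition~\ref{prop:consistency},'' so on that window the truncation is immaterial and the limit genuinely is the constant $c_p$. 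I would state this reduction explicitly at the outset. The only remaining delicate point is the interaction of ``$\mu\not\in S$'' with the grid: one may take the grid points themselves outside $S$ with probability one (they are a fixed finite set, $S$ is a random finite set, so almost surely disjoint), so the grid-point laws of large numbers and the derivative formula \eqref{eq:defdpsin} apply there without modification.
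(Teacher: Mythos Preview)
Your approach is correct and essentially the same as the paper's: both recognize each term of $\dpsin$ as a weighted empirical CDF in $\mu$ and prove uniform convergence via the standard monotonicity-plus-finite-grid Glivenko--Cantelli sandwich. The only notable difference is that the paper first truncates the weights $W_i=\pu(\bsx_{i\pm})/q_\pm(\bsx_{i\pm})$ at a level $M$ before running the grid argument, whereas you apply the sandwich directly; your route is valid because $\e_{q_\pm}(W)\le c_p<\infty$ already makes the untruncated limit bounded and monotone and the pointwise WLLN applicable at each grid point, so the truncation is not actually needed.
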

\begin{proof}
  The set $S$ that we exclude depends on $n$, but the
  union over all $n$ is still countable, hence of measure
  zero. In this proof, we consider $\mu$ not in $S$ for any $n$.
Minus the second average in~\eqref{eq:defdpsin} for $\dpsin(\mu)$ takes the form
$$
\hat F_-(\mu)=\frac1{n_-}\sum_{i=1}^{n-}1_{ Y_i <\mu}W_i
$$
for $Y_i = f(\bsx_{i-})$ and a nonnegative weight $W_i = \pu(\bsx_{i-})/\qm(\bsx_{i-})$
that has a finite expectation. It is a weighted version of an empirical CDF (apart from
using $Y_i<\mu$ instead of $Y_i\le\mu$). We see below that it converges uniformly in $\mu\in(\mu_0-\eta,\mu_0+\eta)$ to its expectation by a generalization of
the Glivenko-Cantelli theorem. The same holds for the first average.

To generalize the Glivenko-Cantelli theorem, let $F_-(\mu) = \e_{\qm}(1_{Y<\mu} W)$.  Choose some bound $M<\infty$
and write $\bar W = \min(W,M)$,  $\bar W_i = \min(W_i,M)$,
$F_-(\mu;M) = \e_{\qm}(1_{Y<\mu}\bar W)$,
and $\hat F_-(\mu;M) =(1/\nm)\sum_{i=1}^{\nm}1_{Y_i<\mu}\bar W_i$.
Then
\begin{align*}
\sup_{\mu} |\hat F_-(\mu)-F_-(\mu)| &\le 
\sup_\mu |\hat F_-(\mu;M)-F_-(\mu;M)| \\
&+\frac1{\nm}\sum_{i=1}^{\nm}(W_i-\bar W_i)+\e_{\qm}(W-\bar M).
\end{align*}
Here and below the supremum is taken over $\mu\in (\mu_0-\eta,\mu_0+\eta)$.

The third term can be made smaller than any $\epsilon_3>0$ by
taking $M$ large enough.  The second term can be made smaller than
any $\epsilon_2>0$ with probability close to 1 by taking $\nm$ large enough.
It remains to show that the first term is uniformly bounded by $\epsilon_3$
with probability close to $1$ for $\nm$ large enough. After that
we choose $\epsilon_1+\epsilon_2+\epsilon_3<\epsilon/2$
and then make a comparable analysis of the first average in \eqref{eq:defdpsin}.

Because $F_-$ is nondecreasing and bounded between $0$ and $M$,
for any $\delta>0$, we may choose a fine grid of values $\nu_0<\nu_1<\dots<\nu_L$
with $\nu_0=\mu_0-\eta$ and $\nu_L=\mu_0+\eta$
so that $\Delta_\ell=F_-(\nu_\ell)-F_-(\nu_{\ell-1})=
\e_{\qm}(\bar W1_{\nu_{\ell-1}<Y\le\nu_\ell})<\delta_1$
for all $\ell=1,\dots,L$.  

Then $\Pr( \max_{0\le\ell\le L}|\hat F_-(\nu_\ell)-F_-(\nu_\ell)|>\delta_2)$
can be made as small as we like by choosing $\nm$ large enough.
We then take $\delta_1+\delta_2<\epsilon_3$.
\end{proof}

The averages in our estimating equations are OIS estimates
$$
\hat\mu_\pm 
= \frac1{n_\pm}\sum_{i=1}^{n_\pm} \frac{(f(\bsx_{i\pm})-\mu_0)_\pm \pu(\bsx_{i\pm})}{q_\pm(\bsx_{i\pm})}
= \frac{1}{n_\pm}\sum_{i=1}^{n_\pm} \frac{c_p(f(\bsx_{i\pm})-\mu_0)_\pm p(\bsx_{i\pm})}{q_\pm(\bsx_{i\pm})},
$$
which are unbiased for
\begin{align}\label{eq:mupm}
\mu_\pm = c_p\int (f(\bsx)-\mu_0)_\pm p(\bsx)\rd\bsx= \int (f(\bsx)-\mu_0)_\pm \pu(\bsx)\rd\bsx.
\end{align}
They have zero variance when $q_\pm \propto (f(\bsx)-\mu_0)_\pm p(\bsx)$.
In general, the variances of these OIS estimates are
\begin{align}\label{eq:varoiszero}
\frac{\sigma^2_\pm}{n_\pm}\quad\text{where}\quad
\sigma^2_\pm=\int\frac{[(f(\bsx)-\mu_0)_\pm \pu(\bsx)-\mu_\pm q_\pm(\bsx)]^2}{q_\pm(\bsx)}\rd\bsx
\end{align}
Now we can state the main result.

\begin{theorem}
Assume the conditions of Proposition~\ref{prop:consistency}
with $n_+/n \to\theta$ and $n_-/n\to(1-\theta)$ for $0<\theta<1$.
If both of $\sigma^2_\pm$ from \eqref{eq:varoiszero}
are finite then for all $t\in\real$,
$$
\lim_{n\to\infty} 
\Pr\left(\frac{\sqrt{n}(\hat\mu-\mu_0)}
{\sqrt{\frac{\sigma^2_+}\theta+\frac{\sigma^2_-}{1-\theta}}}
\le t\right)=\Phi(t)
$$
where $\Phi$ is the $\dnorm(0,1)$ cumulative distribution function.
\end{theorem}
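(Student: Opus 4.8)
The plan is to work from the exact linearization~\eqref{eq:muhatexpr},
$$
\sqrt{n}\,(\hat\mu-\mu_0)
=-\,\frac{\sqrt{n}\,\psin(\mu_0)}{\int_0^1\dpsin\bigl(\mu_0+t(\hat\mu-\mu_0)\bigr)\rd t},
$$
to prove a central limit theorem for the numerator and a law of large numbers for the denominator, and then to combine them via Slutsky's theorem. For the numerator, note that $\psin(\mu_0)=\hat\mu_+-\hat\mu_-$, where $\hat\mu_\pm$ are the OIS averages displayed just before~\eqref{eq:mupm}: each is an average of $n_\pm$ i.i.d.\ summands with mean $\mu_\pm$ from~\eqref{eq:mupm} and per-term variance $\sigma^2_\pm$ from~\eqref{eq:varoiszero}, finite by the theorem's hypothesis. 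By~\eqref{eq:psiresult} the means cancel, $\mu_+-\mu_-=\Psi(\mu_0)=0$, so $\psin(\mu_0)$ is centered. The Lindeberg--L\'evy CLT gives $\sqrt{n_\pm}\,(\hat\mu_\pm-\mu_\pm)\tod\dnorm(0,\sigma^2_\pm)$, and since $n/n_+\to1/\theta$ and $n/n_-\to1/(1-\theta)$ are deterministic, Slutsky promotes this to $\sqrt{n}\,(\hat\mu_+-\mu_+)\tod\dnorm(0,\sigma^2_+/\theta)$ and $\sqrt{n}\,(\hat\mu_--\mu_-)\tod\dnorm(0,\sigma^2_-/(1-\theta))$. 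Because $S_+$ and $S_-$ are independent, this pair converges jointly to a pair of independent Gaussians, so
$$
\sqrt{n}\,\psin(\mu_0)
=\sqrt{n}\,(\hat\mu_+-\mu_+)-\sqrt{n}\,(\hat\mu_--\mu_-)
\ \tod\ \dnorm\Bigl(0,\ \frac{\sigma^2_+}{\theta}+\frac{\sigma^2_-}{1-\theta}\Bigr).
$$

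For the denominator $D_n:=\int_0^1\dpsin(\mu_0+t(\hat\mu-\mu_0))\rd t$, fix $\eta>0$ as in Proposition~\ref{prop:consistency}. By consistency, $\Pr(|\hat\mu-\mu_0|<\eta)\to 1$, and on that event every point $\mu_0+t(\hat\mu-\mu_0)$ with $t\in[0,1]$ lies in $(\mu_0-\eta,\mu_0+\eta)$, where $\dot\Psi\equiv-c_p$ by~\eqref{eq:psiresult}; hence $-c_p=\int_0^1\dot\Psi(\mu_0+t(\hat\mu-\mu_0))\rd t$ and
$$
|D_n+c_p|
\ \le\ \int_0^1\bigl|\dpsin(\mu_0+t(\hat\mu-\mu_0))-\dot\Psi(\mu_0+t(\hat\mu-\mu_0))\bigr|\rd t
\ \le\ \sup_{|\mu-\mu_0|<\eta,\ \mu\notin S}\bigl|\dpsin(\mu)-\dot\Psi(\mu)\bigr|,
$$
since for each realization only finitely many $t$ send $\mu_0+t(\hat\mu-\mu_0)$ into the measure-zero exceptional set $S$ and so do not affect the integral. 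Proposition~\ref{prop:ulln} forces this supremum to $0$ in probability, so $D_n\to-c_p$ in probability; in particular $\Pr(D_n<-c_p/2)\to 1$, keeping the denominator bounded away from $0$. Slutsky's theorem then yields $\sqrt{n}\,(\hat\mu-\mu_0)\tod\dnorm\bigl(0,\ c_p^{-2}(\sigma^2_+/\theta+\sigma^2_-/(1-\theta))\bigr)$, which is the asserted limit under the convention $c_p=1$ carried in the statement; passing to the cumulative distribution function at $t$ and using continuity of $\Phi$ finishes the proof.

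The step I expect to be the real obstacle is the denominator, because $D_n$ averages the merely almost-everywhere defined derivative $\dpsin$ over a \emph{random} interval whose endpoint $\hat\mu$ is itself the estimator. Handling it cleanly requires splicing together three ingredients: (i) the consistency of Proposition~\ref{prop:consistency}, to trap $\hat\mu$ inside the fixed window $(\mu_0-\eta,\mu_0+\eta)$ on which a uniform law of large numbers is available; (ii) that uniform law, Proposition~\ref{prop:ulln}, giving $\sup_{|\mu-\mu_0|<\eta,\ \mu\notin S}|\dpsin(\mu)-\dot\Psi(\mu)|\to 0$ in probability with the exceptional sets removed; and (iii) the elementary but necessary observation that the finitely many values of $t$ for which $\mu_0+t(\hat\mu-\mu_0)$ is a kink of $\psin$ do not affect the integral defining $D_n$. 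By comparison the numerator is routine once the finite-variance hypothesis is in force; the only bookkeeping there is the two independent sample sizes together with the deterministic limits of $n/n_\pm$.
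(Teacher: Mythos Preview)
Your proof is correct and follows the same route as the paper's: a CLT for $\sqrt{n}\,\psin(\mu_0)$ from the two independent OIS averages, convergence of the integrated derivative to $-c_p$ via consistency plus the uniform law of large numbers in Proposition~\ref{prop:ulln}, and then Slutsky's theorem. You are also right to flag the $c_p$ factor: with $\sigma^2_\pm$ defined through $\pu$ as in~\eqref{eq:varoiszero}, the asymptotic variance of $\sqrt{n}(\hat\mu-\mu_0)$ is $c_p^{-2}\bigl(\sigma^2_+/\theta+\sigma^2_-/(1-\theta)\bigr)$, so the display in the theorem is exact only under the normalization $c_p=1$.
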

\begin{proof}
First  $\e(\psin(\mu_0))=0$. To find $\var(\psin(\mu_0))$ note that
$$
\frac1{\np}\sum_{i=1}^{\np}\frac{(f(\bsx_{i+})-\mu)_+\pu(\bsx_{i+})}{\qp(\bsx_{i+})}
=\frac{c_p}{\np}\sum_{i=1}^{\np}\frac{(f(\bsx_{i+})-\mu)_+p(\bsx_{i+})}{\qp(\bsx_{i+})}
$$
has variance $c_p^2\sigma^2_+$. Combined with the corresponding result for
the second term in $\psin(\mu_0)$ we get
$$
\var\bigl(\sqrt{n}\psin(\mu_0)\bigr)
=n\times\Bigl( \frac{\sigma^2_+}{\np}+ \frac{\sigma^2_-}{\nm}\Bigr)c_p^2
\to\Bigl( \frac{\sigma^2_+}{\theta}+ \frac{\sigma^2_-}{1-\theta}\Bigr)c_p^2.
$$
Then by the central limit theorem,
$$\frac{\sqrt{n}\psin(\mu_0)}
{\sqrt{\frac{\sigma^2_+}\theta+\frac{\sigma^2_-}{1-\theta}}}
\tod\dnorm(0,c_p^2).$$
Next,
$$
\hat\mu-\mu_0 = -\frac{\psin(\mu_0)}
{\int_0^1 \dpsin( \mu_0+t(\hat\mu-\mu_0))\rd t}
$$
and the denominator converges in probability to $-c_p$
by Proposition~\ref{prop:ulln}, after noticing that
$\dot\Psi(\mu)=-c_p$ for almost all $\mu$. 
Finally by Slutsky's theorem 
$\sqrt{n}(\hat\mu-\mu_0)/(\sigma^2_+/\theta+\sigma^2_-/(1-\theta))^{1/2}\tod\dnorm(0,1)$.
\end{proof}

The best choice for $\theta$ is $\sigma_+/(\sigma_++\sigma_-)$.
While densities $q_\pm(\bsx)\propto (f(\bsx)-\mu_0)_\pm p(\bsx)$ give
a zero variance estimate for $\psin(\mu_0)$.
These densities may fail to satisfy \eqref{eq:supportq} for some
$\mu\ne\mu_0$.  In practice we would want to modify them
so that \eqref{eq:supportq} holds for all $|\mu-\mu_0|<\eta$
for some $\eta$. This support expansion can be done with only
very small changes to the optimal $q_\pm$.

It may pay to use some coupling between the samples $\bsx_{i+}$
and $\bsx_{i-}$ as \cite{bran:elvi:2024} do in their AIS.
That is most easily done taking $n_+=n_-=n$ and using some
joint distribution $q$ for $n$ IID pairs $(\bsx_{i+},\bsx_{i-})$, with
marginal densities $q_+$ and $q_-$.
Then the appropriate variance quantity is
\begin{align}\label{eq:withcov}
\frac1n\biggl(
\sigma_+^2 +\sigma_-^2
-2\cov_q\Bigl( 
\frac{(f(\bsx_+)-\mu_0)_+p(\bsx_+)}{q_+(\bsx_+)},
\frac{(f(\bsx_-)-\mu_0)_-p(\bsx_+)}{q_-(\bsx_-)}
\Bigr) \biggr).
\end{align}
We would want the covariance above to be positive and arranging
for that is problem specific. A general guideline is that
to give the values $(f(\bsx_+)-\mu_0)_+$
and $(f(\bsx_-)-\mu_0)_-$ a positive correlation we would normally
make $f(\bsx_+)$ and $f(\bsx_-)$ negatively associated.  
The ratios $p(\bsx_+)/q_+(\bsx_+)$ and $p(\bsx_-)/q_-(\bsx_-)$
could potentially undo that positive correlation, so we might
also seek to keep those ratios positively associated.

\subsection{Alternative centering}

Let $g(\bsx)$ have known mean $\e_p(g(\bsx))=\theta$,
and write $g_0(\bsx) = g(\bsx)-\theta$.
For OIS, we could write
$$
\mu = \e_p( (f(\bsx)-g(\bsx))_+)
-\e_p( (f(\bsx)-g(\bsx))_-)+\theta
$$
and then there exist arbitrarily accurate
estimators of the above two expectations.
For EE-SNIS we can do this for $g_0$ but not generally for $g$.
By writing
\begin{align*}
0 
&= \e_q\biggl( \frac{(f(\bsx)-g(\bsx)-\mu)\pu(\bsx)}{q(\bsx)}\biggr)+c_p\theta\\
&= \e_{\qp}\biggl( \frac{(f(\bsx)-g(\bsx)-\mu)_+\pu(\bsx)}{\qp(\bsx)}\biggr)
-\e_{\qm}\biggl( \frac{(f(\bsx)-g(\bsx)-\mu)_-\pu(\bsx)}{\qm(\bsx)}\biggr)+c_p\theta
\end{align*}
we see that to center $f$ around $g$, we need to know
$c_p\theta$.  This is automatic when we know that $\theta=0$
but otherwise requires knowledge of $c_p$ that we do not
ordinarily have when we want to use SNIS.

The consequence is that we can replace $f$ by $f-g_0$.
This simply means that we are allowed to choose from among
integrands that are known to have the same expectation
as $f$ when $\bsx\sim p$.

\section{Other zero variance SNIS estimates}\label{sec:references}
The SNIS estimator
is a ratio estimator, where both numerator and denominator are Monte Carlo
estimates of some expectations.  While most papers use the same 
distribution $q$ and sample points $\bsx_i\sim q$
in the  both numerator and denominator, there is no reason that
we have to do this.  We could sample $\bsx_i\simiid q_1$ for $i\in S_1$
independently of $\bsx_i\simiid q_2$ for $i\in S_2$.
Here, and in what follows, $S_j$ for different $j$ are disjoint sets of indices
with cardinailty $|S_j|$.
With these samples we estimate $\mu$ by
\begin{align}\label{eq:doubleest}
\hat\mu_{q_1,q_2}=
\frac1{|S_1|}\sum_{i\in S_1}
\frac{f(\bsx_i)\pu(\bsx_i)}{q_1(\bsx_i)}
\Bigm/
\frac1{|S_2|}\sum_{i\in S_2}
\frac{\pu(\bsx_i)}{q_2(\bsx_i)}.
\end{align}
The numerator and denominator above are both  OIS estimates.
When $f$ is nonnegative there is a zero variance choice  $q_1$
for the numerator.  Taking $q_2=p$ always gives a zero variance
for the denominator.

The estimate $\hat \mu_{q_1,q_2}$ was proposed
in the double proposal importance sampling (DPIS)
estimator of \cite{lamb:etal:2018} and,
independently, in the amortized Monte Carlo integration (AMCI)
procedure of \cite{goli:wood:rain:2019}.
\cite{bran:elvi:2024} note via \cite{hest:1988}
that this device was also used by \cite{goya:held:shah:1987}.
In that paper, $p$ was a discrete time Markov process and they
used different non-Markovian samplers $q_1$ and $q_2$, so
it is easy to see how it could have been missed by authors
working in a more general framework.

When $f$ takes both positive and negative values, then we
can use positivisation and the estimate
\begin{align}\label{eq:tabi1}
\hat\mu_{q_1,q_2,q_3}=
\frac{\frac1{|S_1|}\sum_{i\in S_1}
\frac{f_+(\bsx_i)\pu(\bsx_i)}{q_1(\bsx_i)}
-
\frac1{|S_2|}\sum_{i\in S_2}
\frac{f_-(\bsx_i)\pu(\bsx_i)}{q_2(\bsx_i)}}
{
\frac1{|S_3|}\sum_{i\in S_3}
\frac{\pu(\bsx_i)}{q_3(\bsx_i)}}
\end{align}
where $\bsx_i\simiid q_j$ for $i\in S_j$ (disjoint).
This is the approach taken by \cite{rain:etal:2020}
in their target-aware Bayesian inference (TABI) estimators.
A straightforward modification is to use
\begin{align}\label{eq:tabi2}
\hat\mu_{q_1,q_2,q_3,q_4}=
\frac{\frac1{|S_1|}\sum_{i\in S_1}
\frac{f_+(\bsx_i)\pu(\bsx_i)}{q_1(\bsx_i)}
}
{\frac1{|S_3|}\sum_{i\in S_3}
\frac{\pu(\bsx_i)}{q_3(\bsx_i)}}
-
\frac{\frac1{|S_2|}\sum_{i\in S_2}
\frac{f_-(\bsx_i)\pu(\bsx_i)}{q_2(\bsx_i)}
}{
{\frac1{|S_4|}\sum_{i\in S_4}
\frac{\pu(\bsx_i)}{q_4(\bsx_i)}}
}.
\end{align}
This allows two different approximations $q_3$ and $q_4$
that should both approximate~$p$.  Incorporating $\mu_4$
increases cost and complexity but it gives another way to reduce
variance, when as described below, the samples from various $q_j$
are coupled.

\cite{rain:etal:2020} mention in passing that TABI can be generalized in
the same way that GPOIS generalized POIS. A generalized (GTABI)
version of~\eqref{eq:tabi1} is
\begin{align}\label{eq:gtabi1}
\hat\mu_{q_1,q_2,q_3,g}=
\frac{\frac1{|S_1|}\sum_{i\in S_1}
\frac{(f(\bsx_i)-g(\bsx_i))_+\pu(\bsx_i)}{q_1(\bsx_i)}
-
\frac1{|S_2|}\sum_{i\in S_2}
\frac{(f(\bsx_i)-g(\bsx_I))_-\pu(\bsx_i)}{q_2(\bsx_i)}}
{
\frac1{|S_3|}\sum_{i\in S_3}
\frac{\pu(\bsx_i)}{q_3(\bsx_i)}}
+\theta
\end{align}
where $\theta=\e_p(g(\bsx))$.
We can also generalize~\eqref{eq:tabi2}.

To approach zero variance in the above algorithms
we need a density $q_j$ that approaches $p$.
The original motivation for SNIS is that $p$ is difficult
to sample from. We might then expect it to be particularly
hard to approximate well.  In Bayesian settings, our integrands
of interest commonly include $f(\bsx) =x_j$ or $x_j^2$
when we want posterior means and variance of single parameters.
It is speculative, though plausible, to suppose that in these cases
$f(\bsx)_\pm\pu(\bsx)$  or $f(\bsx)^2\pu(\bsx)$ 
might be easier to approximate than $\pu(\bsx)$ itself because
we know how to sample $\bsx$ in order to make $(x_j)_\pm$ or $x_j^2$ large.

While the methods above can approach zero variance,
they will generally not have zero variance at any
stage of AIS. Then
using independent samples in all the constituent integral
estimates misses an opportunity to arrange for
some error reductions similar to what we can
get from the method of common random numbers.
 \cite{bran:elvi:2024} propose
a generalized self-normalized importance sampler
that builds in an association between the numerator
and denominator sample values.
Let $\bsx_i$ be used
in the numerator and $\bsz_i$ in the denominator. Then they study
\begin{align}\label{eq:crndouble}
\hat\mu_q=
\frac1n\sum_{i=1}^n\frac{f(\bsx_i)\pu(\bsx_i)}{q_1(\bsx_i)}\Bigm/\frac1n\sum_{i=1}^n\frac{\pu(\bsz_i)}{q_2(\bsz_i)}
\end{align}
where $(\bsx_i,\bsz_i)$ are IID draws from a joint distribution 
$q$ where, for each index $i$,  
$\bsx_i\sim q_1$ and $\bsz_i\sim q_2$  can be dependent.
The problem now is to choose the joint distribution $q(\bsx,\bsz)$.
It must have marginal probability densities $q_1(\bsx)$ and $q_2(\bsz)$
though the joint distribution need not have a probability
density function. For example $\Pr_q(\bsx=\bsz)>0$ is allowed,
or some other deterministic relationship between $\bsx$
and $\bsz$ could have positive probability.

For given marginals $q_1$ and $q_2$, \cite{bran:elvi:2024}
define the best joint distribution $q$ for
the estimator in~\eqref{eq:crndouble} 
to be the one that maximizes
\begin{align}\label{eq:howtocouple}
\e_q\Bigl( \frac{q_1^*(\bsx)q_2^*(\bsz)}{q_1(\bsx)q_2(\bsz)}\Bigr)
\end{align}
where $q_1^*\propto fp$ and $q_2^*=p$ are the
two optimal samplers. Finding the best $q$ is
a difficult optimization problem and to get something 
computationally tractable they work within some parametric families.

The main difference between these zero
variance approaches centers around which
quantities we must be able to approximate
by a normalized distribution $q_j$ that  we can sample from.
Table~\ref{tab:todo} summarizes that task for
six different methods.  All of the OIS methods require
a known value of $c_p$. DPIS/AMCI and TABI require an
approximation for $p$.  EE-SNIS does not require
an approximation to~$p$.  On the other hand it does
not work with general centering variable $g(\bsx)$
the way that TABI generalizes to GTABI.

\begin{table}
\centering
\begin{tabular}{lllllll}
\toprule
Method & Targets for $q_j$ & Other needs\\
\midrule
OIS & $|f|p$ & known $c_p$ and $f\ge0$ (or $f\le 0$)\\
POIS & $f_+p$,\quad  $f_-p$ & known $c_p$\\
GPOIS & $(f-g)_+p$ \quad $(f-g)_-p$ & known $c_p$ and $\e_p(g(\bsx))$\\
DPIS/AMCI & $fp$ \quad $p$ \\
TABI & $f_+p$ \quad $f_-p$ \quad $p$\\
GTABI & $(f-g)_+p$ \quad $(f-g)_-p$ \quad $p$ & known $\e_p(g(\bsx))$\\
EE-SNIS & $(f-\mu_0)_+p$\quad $(f-\mu_0)_-p$\\
\bottomrule
\end{tabular}
\caption{\label{tab:todo}
  For each method, the second column lists the
  distributions that we need some $q_j$ to approximate,
  in order to approach a zero variance solution.
  The third column lists other requirements.
}
\end{table}

It is reasonable to suppose that careful coupling of the
distributions $q_\pm$ could bring an improvement
to EE-SNIS.  It would natural
to require $\np=\nm$ and then seek to maximize
the covariance in~\eqref{eq:withcov}.

\section{Conclusions}\label{sec:conclusions}

This paper introduces a new zero variance strategy for
SNIS based on estimating equations.  The EE-SNIS algorithm
requires samplers that approximate different distributions
than prior solutions TABI/DPIS/AMCI require.  Those methods separately
estimate numerator and denominator in the SNIS ratio estimate
while EE-SNIS does not require us to find a sampler that approximates
$p$.  This supports an alternative approach to AIS for settings
where $p$ is difficult to approximate well.  
Devising a specific AIS for EE-SNIS is outside the scope of this article.  
Similarly, determining whether new or old approaches 
lead to a better AIS depends on the families of adaptive samplers in use as well as the  
underlying $p$ and integrand(s) $f$ of interest, and is outside  
the present scope.
Properly addressing either of these two issues would require
significant additional length.

\section*{Acknowledgments}

This work was supported by the U.S.\ National Science Foundation
under grant DMS-2152780.

\bibliographystyle{apalike}
\bibliography{rare}
\end{document}